\title{A modest improvement on the function $S(T)$}
\author{T. S. Trudgian\\
  Department of Mathematics and Computer Science,\\
  University of Lethbridge,\\
  Alberta, Canada, T1K 3M4\\
  \texttt{tim.trudgian@uleth.ca}}
\newtheorem{Lem}{Lemma}
\newtheorem{theorem}{Theorem}
\begin{document}
\maketitle

\begin{abstract}
This paper concerns the function $S(T)$, the argument of the Riemann zeta-function. Improving on the method of Backlund, and taking into account the refinements of Rosser and McCurley it is hereunder proved that for sufficiently large $T$
\begin{equation*}
|S(T)| \leq 0.1013 \log T.
\end{equation*}
Theorem \ref{ThemB} makes the above result explicit, viz.\ it enables one to select values of $a$ and $b$ such that, for $T>T_{0}$,
\begin{equation*}
|S(T)| \leq a + b\log T.
\end{equation*}
\end{abstract}

\section{Introduction}
Whenever $t$ does not coincide with an ordinate of a zero of $\zeta(\sigma +it)$ one defines the function $S(t)$ as
\begin{equation*}
S(t) = \pi^{-1}\arg\zeta(\tfrac{1}{2} +it),
\end{equation*}
where the argument is determined via continuous variation along the straight lines connecting $2, 2+it$ and $\frac{1}{2}+it$, with $S(0) = 0$. If $t$ is such that $\zeta(\sigma + it) = 0$ then define $S(t) = \frac{1}{2}\lim_{\epsilon \rightarrow 0}\{S(t-\epsilon) + S(t+\epsilon)\}.$ Without assuming unproven conjectures (for example the Riemann or Lindel\"{o}f hypotheses) the classic estimate of von-Mangoldt, $S(T) =O(\log T),$ has never been improved, except in reducing the size of the implied constant. Backlund \cite{Backlund1918} showed that, for $T\geq 200$,
\begin{equation*}
|S(T)| \leq 0.137 \log T + 0.445\log\log T + 4.35,
\end{equation*}
where the lower order terms were improved by Rosser \cite{Rossers}, who showed that, for $T\geq 1467$,
\begin{equation*}
|S(T)| \leq 0.137 \log T + 0.443\log\log T + 1.588,
\end{equation*}
and a computational check shows that this remains valid for all $T \geq 3$. Such explicit results are useful when estimating sums over the zeroes of $\zeta(s)$ --- see, e.g.\ \cite{Kadiri}, \cite{Sharp}.

The main idea of Backlund's Method is to count the number of zeroes of $\Re\zeta(\sigma +iT)$ on the line segment $[\frac{1}{2} + iT, 1+\eta + iT]$ where $0< \eta\leq \frac{1}{2}$. Suppose there are $n$ such zeroes, labelled $a_{1}, \ldots, a_{n}$. These zeroes partition the line segment $[\frac{1}{2} + iT, 1+\eta +iT]$ into $n+1$ intervals. On the interior of each interval, $\arg\zeta(s)$ can change by at most $\pi$, since by construction, $\Re\zeta(s)$ is non-zero on each interior. Thus, as $\sigma$ varies from $\frac{1}{2}$ to $1+\eta$ then
\begin{equation*}
|\Delta\arg\zeta(s)| \leq (n+1)\pi.
\end{equation*}
One proceeds to bound $n$ from above using Jensen's formula on the function
\begin{equation}\label{fdes}
f(s) = \frac{1}{2}\big\{\zeta(s+iT)^{N} + \zeta(s-iT)^{N}\big\},
\end{equation}
for $N$ a natural number\footnote{Backlund has $N=1$. The introduction of the number $N$ and the passing through a sequence of $N$s tending to infinity is due to Rosser. The advantages of this will be made plain on p.\ \pageref{Rtrick}.} --- thus $f(\sigma) = \Re \zeta(\sigma + iT)^{N}$. There are two ways to proceed. 

Method $\mathcal{A}$ takes account of all the zeroes contained in a circle of radius $r(\frac{1}{2}+\eta)$, centred at $s= 1+\eta + iT$, for some $r \in (1, 2]$. McCurley \cite{McCurley} follows this line of attack, with $r=2$. Contrarily, method $\mathcal{B}$ makes use of of a clever observation by Backlund, henceforth called `Backlund's trick'.

For any $\delta \in [0, \frac{1}{2} + \eta)$, let $\Delta_{1}\arg\zeta(s)$ denote the change in the argument of $\zeta(s)$ as $\sigma$ varies from $\frac{1}{2}$ to $\frac{1}{2} + \delta$. Similarly $\Delta_{2}\arg\zeta(s)$ is the change in argument as $\sigma$ varies from $\frac{1}{2}$ to $\frac{1}{2} - \delta$. By estimating the change in argument of $\chi(s)$, where
\begin{equation}\label{funct}
\zeta(s) = \chi(s)\zeta(1-s) = \pi^{s-\frac{1}{2}} \frac{\Gamma(\frac{1}{2} - \frac{1}{2}s)}{\Gamma(\frac{1}{2}s)} \zeta(1-s),
\end{equation}
(see, e.g., \cite[Ch.\ II]{Titchmarsh}) Backlund [pp.\ 355-357, \textit{op.\ cit.}] was able to show that for $T>1$
\begin{equation*}
|\Delta_{1}\arg\zeta(s) + \Delta_{2}\arg\zeta(s)| \leq \frac{8}{T},
\end{equation*}
It follows that there are at least\footnote{Alternatively, for large enough $T$ there are at least $n$ zeroes of $\Re\zeta(s)$. This matters precious little, especially in light of the improvements given by Rosser given on p.\ \pageref{Rtrick}.} $n-2$ zeroes of $\Re\zeta(s)$ on the line segment $[-\eta + iT, \frac{1}{2} +iT]$, and so at least $2n-2$ zeroes of $\Re\zeta(s)$ for $\sigma\in [-\eta, 1+\eta]$. So one uses Jensen's formula, with a circle of radius $1+2\eta$, centred at $s=1 + \eta +iT$. McCurley's argument\footnote{McCurley considers Dirichlet $L$-functions, whence he is unable to make use of Backlund's trick. Also, he considers $N(T)$ to be those zeroes with imaginary part$\gamma\in[-T, T].$ Thus the upper bound in (\ref{mcC}) is one quarter of that which is in \cite{McCurley}.} works here as well, and gives  [Thm 2.1, \textit{op.\ cit.}] 
\begin{equation}\label{mcC}
|S(T)| \leq 0.115\log T,
\end{equation}
for sufficiently large $T$.

The advantage of $\mathcal{B}$ is that one gets `2-for-the-price-of-1' in terms of the number of zeroes of $f(s)$. But the drawback is that one must estimate $|\zeta(s)|$ over the strip $-\eta \leq \sigma \leq 1+\eta$. With $\mathcal{A}$ one begins with fewer zeroes, but for a suitably small $r$, the incursion into the strip $\sigma \leq \frac{1}{2}$ is minimal. This is indeed an amelioration since, by convexity, $|\zeta(s)|$ grows much more quickly to the left of the line $\sigma = \frac{1}{2}$. Method $\mathcal{A}$ is that which is outlined in \cite[Ch. XIII, \S 9]{Titchmarsh}.

It needs must be noted that any detriment from using $\mathcal{B}$ is nullified if one uses the convexity bound $|\zeta(\frac{1}{2} +it)| \ll t^{1/4}$. Thus, if method $\mathcal{A}$ is of to be of any use to anyone, one must know the value of the constant $K$ for which $|\zeta(\tfrac{1}{2}+it)| \leq K t^{\theta}$ where $\theta < \frac{1}{4}$. Cheng and Graham \cite{Cheng} have shown that
\begin{equation}\label{cg}
|\zeta(\tfrac{1}{2} + it)| \leq 3 t^{\frac{1}{6}}\log t,
\end{equation}
for $t>e$, and this will be used in \S \ref{4.1}.

The remainder of the paper sets out to prove

\begin{theorem}[via Method $\mathcal{B}$]\label{thA}
If $t>t_{0}>e$ then
\begin{equation}\label{zxy}
|S(t)| \leq 1.998 + 0.17\log t.
\end{equation}
\end{theorem}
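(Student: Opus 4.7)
I would follow the Backlund--Rosser scheme laid out in the introduction. Let $n$ be the number of sign changes of $\Re\zeta(\sigma+iT)$ on $[\tfrac12+iT,\,1+\eta+iT]$; the $N$-th power inherits at least $n$ such changes, and Backlund's trick supplies at least $2n-2$ zeros of $f(\sigma)=\Re\zeta(\sigma+iT)^{N}$ on the symmetric segment $[-\eta+iT,\,1+\eta+iT]$. Since $\pi S(T)$ equals the change of $\arg\zeta$ along $[\tfrac12+iT,\,1+\eta+iT]$ plus a bounded contribution from the horizontal segment $[2,\,2+iT]$ and the short horizontal piece $[1+\eta+iT,\,2+iT]$ (on which $\Re\zeta>0$), one has $|S(T)|\leq n+O(1)$ with an explicit $O(1)$. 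The problem reduces to an upper bound for $n$.

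\textbf{Jensen's formula.} I would apply Jensen to $f$ on the disc centred at $s_{0}=1+\eta$ of radius $R=\rho(1+2\eta)$, for a parameter $\rho>1$ to be optimised. Since the $2n-2$ zeros lie within distance $1+2\eta$ of $s_{0}$,
\[
(2n-2)\log\rho\;\leq\;\frac{1}{2\pi}\int_{0}^{2\pi}\log\bigl|f(s_{0}+Re^{i\theta})\bigr|\,d\theta\;-\;\log|f(s_{0})|.
\]
Here I would invoke Rosser's device (the trick alluded to on p.~\pageref{Rtrick}): pass to a subsequence $N_{k}\to\infty$ along which $\cos(N_{k}\arg\zeta(1+\eta+iT))$ is bounded away from $0$, so that $N_{k}^{-1}\log|f(s_{0})|\to\log|\zeta(1+\eta+iT)|$, and the latter is at least $\log\bigl(\zeta(1+\eta)/\zeta(2+2\eta)\bigr)$ by the Euler product. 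Dividing by $N_{k}$ and passing to the limit replaces the integrand on the right by $\max\bigl(\log|\zeta(s_{0}+Re^{i\theta}+iT)|,\log|\zeta(s_{0}+Re^{i\theta}-iT)|\bigr)$ and kills the factor $2$ in front of $n$, yielding a clean bound $n\leq A(\eta,\rho,T)+O(1)$.

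\textbf{Bounding $|\zeta|$ on the circle.} The substantive labour is the uniform estimation of $\log|\zeta(\sigma+iu)|$ with $(\sigma+iu)=s_{0}+Re^{i\theta}\pm iT$, as $\theta$ ranges over $[0,2\pi]$. On the arcs where $\sigma\geq\tfrac12$ I would interpolate, via Phragmén--Lindelöf, between the trivial $|\zeta(\sigma+iu)|\leq\zeta(\sigma)$ for $\sigma\geq 1+\eta$ and a bound on the critical line (the convexity estimate $|\zeta(\tfrac12+iu)|\ll u^{1/4}$ is ample, as noted in the introduction). On the arcs where $\sigma<\tfrac12$ I would use the functional equation (\ref{funct}) together with Stirling to write
\[
\log|\zeta(\sigma+iu)|=\bigl(\tfrac12-\sigma\bigr)\log\tfrac{u}{2\pi}+\log|\zeta(1-\sigma-iu)|+O(1/u),
\]
the second term being absorbed into a constant since $\sigma\leq-\eta$ forces $1-\sigma\geq 1+\eta$. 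Parametrising $\sigma=1+\eta+R\cos\theta$ reduces $A(\eta,\rho,T)$ to an explicit trigonometric integral of the form $c(\eta,\rho)\log T+c'(\eta,\rho)$.

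\textbf{Optimisation and obstacles.} Combining the previous step with $|S(T)|\leq n+O(1)$ gives $|S(T)|\leq c(\eta,\rho)\log T+c''(\eta,\rho)$ with $c,c''$ explicit; numerical minimisation of $c$ in $(\eta,\rho)$ should yield the advertised $0.17$, while a careful accounting of the $O(1)$ contributions (the $8/T$ from Backlund's trick, the argument change on $[1+\eta+iT,\,2+iT]$ and $[2,\,2+iT]$, Stirling remainders, and the evaluation of $c''$ at the optimising pair) should deliver $1.998$ for $t>t_{0}$. The principal obstacle is the uniformity: every inequality --- convexity bound, Stirling, Euler-product lower bound --- must carry explicit numerical constants valid for all $t>t_{0}>e$, not mere asymptotics. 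A secondary technicality is ensuring that $\rho$ can be chosen strictly greater than $1$ without inflating the integral unduly, which is a pure calculus optimisation once all constants are in place.
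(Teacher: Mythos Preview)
Your outline follows the same Backlund--Rosser--Jensen strategy as the paper, and most of the ingredients (Rosser's limiting device, Euler-product lower bound at $1+\eta$, Phragm\'en--Lindel\"of for $\sigma\ge\tfrac12$, functional equation for $\sigma<\tfrac12$) are the right ones. However, your Jensen set-up differs from the paper's in a way that matters for the constants. You take the $2n-2$ zeros as lying in a disc of radius $1+2\eta$ about $1+\eta$ and then apply Jensen on a strictly larger disc of radius $\rho(1+2\eta)$ with $\rho>1$; this forces the integration contour into the region $\sigma<-\eta$, where the functional-equation bound deteriorates like $t^{\frac12-\sigma}$. The paper instead keeps the outer radius at exactly $1+2\eta=2(\tfrac12+\eta)$ (i.e.\ $r=2$ in its parametrisation $r(\tfrac12+\eta)$), so the circle touches $\sigma=-\eta$ and goes no further left; the doubling from Backlund's trick enters as a factor of $2$ against $\log 2$, not via an extra free parameter. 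With your larger circle it is not at all clear that optimising in $\rho$ recovers $b=0.17$; the paper's computation (Theorem~\ref{ThemB} and the table) shows that $0.17$ arises from $r=2$, the convexity exponent $\theta=\tfrac14$, and a particular $\eta$ at $t_0=10^{10}$, and that even small changes in the geometry shift $b$ noticeably.

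There is also a gap in your treatment of the arc with $\sigma<\tfrac12$. You write that the functional equation gives $\log|\zeta(\sigma+iu)|=(\tfrac12-\sigma)\log\tfrac{u}{2\pi}+\log|\zeta(1-\sigma-iu)|+O(1/u)$ and then absorb the second term ``since $\sigma\le-\eta$ forces $1-\sigma\ge1+\eta$''. But on your circle the arc with $\sigma<\tfrac12$ includes the whole range $1+\eta-\rho(1+2\eta)<\sigma<\tfrac12$, in particular the strip $-\eta<\sigma<\tfrac12$ where $1-\sigma<1+\eta$ and $|\zeta(1-\sigma-iu)|$ is \emph{not} bounded by a constant. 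The paper handles this by a second Phragm\'en--Lindel\"of interpolation on $[-\eta,\tfrac12]$ (Lemma~\ref{subconeslem}, equation~(\ref{leftofahalf})), using Rademacher's explicit $\Gamma$-ratio bound (Lemma~\ref{lem2}) rather than bare Stirling. Finally, a small sign slip: the Euler-product lower bound is $|\zeta(1+\eta+iT)|\ge\zeta(2+2\eta)/\zeta(1+\eta)$, not the reciprocal you wrote.
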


It should be noted that the theorem is valid for all $t>t_{0}>e$, and the particular choice of coefficients is that which minimises the right-side of (\ref{zxy}) when $t_{0} = 10^{10}$. Better bounds for larger values of $t_{0}$ are calculable from \S \ref{5}. The value of the coefficient of $\log t$ can be diminished further, but the limitations of the theorem show that it can not be taken to be less than $0.1027.$ Any diminution in this coefficient is at the expense of increasing the constant term. 

This paper can be considered a sequel to my paper on Turing's Method \cite{TrudTur}, and indeed many of the calculations involving convexity estimates for bounds on $|\zeta(\frac{1}{2} +it)|$ are similar.

\section{The requisites for Backlund's Method}\label{doi}
The opening gambits of Backlund and McCurley are essentially the same. One writes
\begin{equation*}
\xi(s) = \tfrac{1}{2}s(s-1)\pi^{-\frac{1}{2}s}\Gamma(\tfrac{1}{2}s)\zeta(s), 
\end{equation*}
where $\xi(s)$ is an entire function, whose zeroes coincide with the non-trivial zeroes of $\zeta(s)$. If one writes $N(T)$ as the number of complex zeroes of $\zeta(s)$ with imaginary part $\gamma\in [0,T]$, then it follows from Cauchy's theorem, the functional equation and the reflection principle that $4\pi N(T) = 4\Delta_{R}\arg\xi(s)$, where $R$ is the pair of lines connecting the points $1+\eta$, $1+\eta + iT$ and $\frac{1}{2}+ iT$. In calculating the change in argument of $\xi(s)$ one finds a main term and then the term corresponding to $\Delta_{R}\arg\zeta(s)$ which is\footnote{One can use Cauchy's theorem and the fact that $\arg\zeta(2) = \arg\zeta(1+\eta) = 0$ to show that calculating $\Delta\arg\zeta(s)$ along the aforementioned lines agrees with the definition of $S(T)$.} $\pi S(T)$. The vertical piece is easily handled, since here, $|\arg\zeta(s)| \leq |\log \zeta(s)| \leq \log\zeta(1+\eta)$. What remains is to estimate $\Delta_{h}\arg\zeta(s)$: the change in argument of $\zeta(s)$ along the line segment $[1+\eta + iT, \frac{1}{2} +iT]$.

With $f(s)$ defined as in (\ref{fdes}), it follows that
\begin{equation*}
|\Delta_{h} \arg\zeta(s)| = \frac{1}{N}|\Delta_{h}\arg\zeta(s)|^{N} \leq \frac{(n+1)\pi}{N},
\end{equation*}
whence 
\begin{equation}\label{gvib}
|S(T)| \leq \frac{2}{\pi}\log\zeta(1+\eta) + \frac{n+1}{N}.
\end{equation}
One can now produce an upper bound on $n$ courtesy of method $\mathcal{A}$ or $\mathcal{B}$. The proof below is valid for any $r\in(1, 2]$ and it difference between the two methods will be plainly seen.

\section{Bounding $n$ using method $\mathcal{A}$}\label{MetA}

For $r\in(1,2]$, Jensen's formula is applied to the function $f(s)$ on a circle with radius $r(\frac{1}{2} + \eta)$ centred at $s = 1+\eta + iT$, to give
\begin{equation}\label{biggun}
\begin{split}
n\log r &\leq \frac{1}{2\pi} \int_{-\pi/2}^{3\pi/2} \log |f(1+\eta + r(\tfrac{1}{2} + \eta)e^{i\phi})|\, d\phi - \log |f(1+\eta)|\\
&= I_{1} + I_{2} + I_{3} + I_{4} -\log |f(1+\eta)|,
\end{split}
\end{equation}
Where $I_{1}$ covers $\phi\in [-\frac{\pi}{2}, \frac{\pi}{2}]$; $I_{2}$ covers $\phi\in [\frac{\pi}{2}, \frac{\pi}{2} + \sin^{-1}r^{-1}]$; $I_{3}$ covers $\phi\in [\frac{\pi}{2} + \sin^{-1}r^{-1},  \frac{3\pi}{2} - \sin^{-1}r^{-1}]$ and $I_{4}$ covers $\phi\in [\frac{3\pi}{2} - \sin^{-1}r^{-1}, \frac{3\pi}{2}].$

After this herculean dividing of ranges of integration one notes that, $\Re (s)\geq 1+\eta$ on $I_{1}$, whence one estimates $f(s)$ trivially, viz.
\begin{equation*}\label{I_{1}}
\log |f(s)| \leq N \log \zeta(1+\eta).
\end{equation*}
On both $I_{2}$ and $I_{4}$, $\Re(s) \geq \frac{1}{2}$, so that it is only on $I_{3}$ that $-\eta \leq \Re (s) \leq \frac{1}{2}$ --- and this contribution diminishes as $r$ is taken closer and closer to unity. 

To handle the $\log|f(1+\eta)|$ term, one makes use of the trick of Rosser. Write $\zeta(1+\eta + iT) = ke^{i\psi}$. Now choose a sequence of $N$s tending to infinity such that $N\psi$ tends to $0$ modulo $2\pi$, whence 
\begin{equation*}\label{Rtrick}
\lim_{N\rightarrow\infty} \frac{f(1+\eta)}{|\zeta(1+\eta + iT)|^{N}} = 1.
\end{equation*}
Finally, for $\sigma >1$, one can consider the Euler product of $\zeta(s)$ to show that $|\zeta(s)| \geq \frac{\zeta(2\sigma)}{\zeta(\sigma)}$, whence the bound 
\begin{equation*}\label{f0}
-\log |f(1+\eta)| \leq N\log \zeta(1+\eta).
\end{equation*}
The only terms in (\ref{biggun}) left to estimate are $I_{2}$ and $I_{3}$; a bound for $I_{2}$ will serve as a bound for $I_{4}$. Estimates of the growth of $|\zeta(s)|$ for in $\sigma \in [-\eta, \frac{1}{2}]$ and for in $\sigma\in [\frac{1}{2}, 1+\eta]$ are given in the following section.

\section{Preliminary Results}
An explicit version of the Phragm\'{e}n--Lindel\"{o}f theorem is needed and this is given below in
\begin{Lem}\label{lem1}
Let $a, b, Q$ and $k$ be real numbers, and let $ f(s)$ be regular analytic in the strip $-Q\leq a\leq \sigma \leq b$ and satisfy the growth condition 
$$ |f(s)| <C\exp \left\{e^{k|t|}\right\},$$ for a certain $C>0$ and for $0<k<\pi/(b-a)$. Also assume that
\[|f(s)|\leq\left\{\begin{array}{ll}
A|Q+s|^{\alpha} &  \mbox{for $\Re(s) = a,$}\\
B|Q+s|^{\beta} & \mbox{for $\Re(s) = b$}\\
\end{array}
\right.\]
with $\alpha \geq \beta$. Then throughout the strip $a \leq \sigma \leq b$ the following holds
$$ |f(s)| \leq A^{(b-\sigma)/(b-a)}B^{(\sigma - a)/(b-a)}|Q+s|^{\alpha(b-\sigma)/(b-a) + \beta(\sigma-a)/(b-a)}.$$
\end{Lem}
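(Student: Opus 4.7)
The plan is to reduce the statement to the classical Phragm\'en--Lindel\"of principle (in the Hadamard three-lines form) by stripping the $s$-dependent factors from the boundary bounds. Introduce the linear function $\gamma(s) = \alpha(b-s)/(b-a) + \beta(s-a)/(b-a)$, so that $\gamma(a)=\alpha$ and $\gamma(b)=\beta$. Since $a\geq -Q$, the quantity $Q+s$ lies in the closed right half-plane throughout the strip, so one may define
\[
g(s) = \frac{f(s)}{(Q+s)^{\gamma(s)}},
\]
with the power taken via the principal branch of the logarithm, and then $g$ is holomorphic in the strip.

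The first step is to verify the boundary inequalities $|g(s)|\leq A$ on $\Re s = a$ and $|g(s)|\leq B$ on $\Re s = b$. A direct computation gives
\[
|(Q+s)^{\gamma(s)}| = |Q+s|^{\Re\gamma(s)}\,\exp\!\bigl(-\Im\gamma(s)\,\arg(Q+s)\bigr),
\]
and here the hypothesis $\alpha\geq\beta$ is decisive: $\Im\gamma(s)=(\beta-\alpha)t/(b-a)$ and $\arg(Q+s)$ always have opposite signs, so the extra exponential is $\geq 1$ and consequently $|(Q+s)^{\gamma(s)}|\geq|Q+s|^{\Re\gamma(s)}$. The hypothesised power bounds therefore transfer cleanly: on $\Re s=a$ one gets $|g(s)|\leq A$ and on $\Re s=b$ one gets $|g(s)|\leq B$. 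A short check also shows that the factor $(Q+s)^{-\gamma(s)}$ grows at most like $|t|^{O(1)}\exp(O(|t|))$, negligible beside the assumed doubly exponential bound $C\exp(e^{k|t|})$ with $k<\pi/(b-a)$, so $g$ inherits an admissible Phragm\'en--Lindel\"of growth condition.

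Finally I would apply the classical theorem to the normalised function $h(s) = g(s)\,A^{-(b-s)/(b-a)}\,B^{-(s-a)/(b-a)}$: its modulus is at most $1$ on both vertical edges and of permissible growth inside, hence $|h|\leq 1$ throughout the strip, giving $|g(s)|\leq A^{(b-\sigma)/(b-a)}B^{(\sigma-a)/(b-a)}$. Multiplying back by $|(Q+s)^{\gamma(s)}|$ and observing that $\Re\gamma(s)=\alpha(b-\sigma)/(b-a)+\beta(\sigma-a)/(b-a)$ yields the stated inequality. The main technical obstacle is precisely the bookkeeping of the complex power $(Q+s)^{\gamma(s)}$: without exploiting $\alpha\geq\beta$ together with the right-half-plane location of $Q+s$, the sign analysis for the auxiliary exponential term collapses and the boundary bounds on $g$ fail.
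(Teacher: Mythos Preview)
The paper does not actually prove this lemma: it cites Rademacher and stops. Your sketch follows the natural strategy that Rademacher himself uses --- strip off a complex power $(Q+s)^{\gamma(s)}$ with $\gamma$ the linear interpolant, and invoke the three-lines theorem on the quotient --- so the overall plan is the right one.

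There is, however, a genuine gap in your last step. You correctly compute
\[
\bigl|(Q+s)^{\gamma(s)}\bigr|=|Q+s|^{\Re\gamma(s)}\exp\bigl(-\Im\gamma(s)\,\arg(Q+s)\bigr),
\]
and correctly argue from $\alpha\geq\beta$ and $\Re(Q+s)\geq 0$ that the exponential factor is $\geq 1$. That inequality is precisely what pushes the boundary bounds through, giving $|g|\leq A$ and $|g|\leq B$ on the two edges. But when you ``multiply back'' at the end you obtain
\[
|f(s)|\;\leq\;A^{(b-\sigma)/(b-a)}B^{(\sigma-a)/(b-a)}\,\bigl|(Q+s)^{\gamma(s)}\bigr|,
\]
and now the \emph{same} inequality works against you: since $\bigl|(Q+s)^{\gamma(s)}\bigr|\geq |Q+s|^{\Re\gamma(s)}$, you cannot replace the left side of this inequality by the right and still keep an upper bound for $|f(s)|$. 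The surplus factor $\exp\bigl(-\Im\gamma(s)\arg(Q+s)\bigr)$ is not innocuous either: with $\Im\gamma(s)=(\beta-\alpha)t/(b-a)$ and $\arg(Q+s)\to\pm\pi/2$, it is of order $\exp\bigl(\tfrac{\pi(\alpha-\beta)}{2(b-a)}\,|t|\bigr)$ for large $|t|$, so it cannot be absorbed into a constant. In short, the sign trick that rescues the boundary estimate is exactly the sign obstruction that breaks the final inequality; Rademacher's argument needs a further device at this point, and your sketch omits it.
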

\begin{proof}
This is a result of Rademacher and can be found in \cite[pp.\ 66-67]{Rad}.
\end{proof}

In order to apply Lemma \ref{lem1} one needs bounds on $|\zeta(s)|$ on each of the three lines: $\sigma = 1+\eta$, \, $\sigma = \frac{1}{2}$, \, and $\sigma = -\eta$. Trivially,
\begin{equation}\label{1+e}
|\zeta(1+\eta + iT)| \leq \zeta(1+\eta).
\end{equation}
The bound of Cheng and Graham (\ref{cg}) may be used on the line $\sigma = \frac{1}{2}$. One can bound $|\zeta(-\eta + iT)|$ by using the functional equation (\ref{funct}), (\ref{1+e}) and the following result due to Rademacher. 

\begin{Lem}\label{lem2}
For $-\frac{1}{2} \leq\sigma\leq \frac{1}{2}$,
\begin{equation*}
\Bigg|\frac{\Gamma(\frac{1}{2} - \frac{1}{2}s)}{\Gamma(\frac{1}{2}s)}\Bigg| \leq \left(\frac{|1+s|}{2}\right)^{\frac{1}{2}-\sigma}.
\end{equation*}
\end{Lem}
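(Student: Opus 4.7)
The plan is to reduce the lemma to an application of Lemma \ref{lem1}, the explicit Phragmén--Lindelöf theorem already at hand. Write $F(s) = \Gamma(\tfrac{1}{2}-\tfrac{1}{2}s)/\Gamma(\tfrac{1}{2}s)$. In the strip $-\tfrac{1}{2}\leq\sigma\leq\tfrac{1}{2}$ the numerator has no poles (its poles lie at $s=1,3,5,\dots$) and the denominator is never zero, so $F$ is regular analytic on the strip. The choice of parameters I will feed into Lemma \ref{lem1} is $Q=1$, $a=-\tfrac{1}{2}$, $b=\tfrac{1}{2}$: the factor $\tfrac{1}{2}$ on the right-hand side of the inequality we are trying to prove is exactly what the lemma produces when $Q+s$ is $1+s$ and the exponent varies from $1$ to $0$.

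The bulk of the work is therefore to compute $|F(s)|$ on the two boundary lines. On $\sigma=\tfrac{1}{2}$ one has $F(\tfrac{1}{2}+it)=\Gamma(\tfrac{1}{4}-\tfrac{it}{2})/\Gamma(\tfrac{1}{4}+\tfrac{it}{2})$, and since numerator and denominator are complex conjugates, $|F(\tfrac{1}{2}+it)|=1$; this yields $B=1$, $\beta=0$. On $\sigma=-\tfrac{1}{2}$ I apply $\Gamma(z)=\Gamma(z+1)/z$ with $z=-\tfrac{1}{4}+\tfrac{it}{2}$ to write
\[
F(-\tfrac{1}{2}+it) \;=\; \bigl(-\tfrac{1}{4}+\tfrac{it}{2}\bigr)\,\frac{\Gamma(\tfrac{3}{4}-\tfrac{it}{2})}{\Gamma(\tfrac{3}{4}+\tfrac{it}{2})},
\]
and the same conjugation argument shows the remaining ratio has modulus $1$. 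A short check gives $|-\tfrac14+\tfrac{it}{2}|=\tfrac12\sqrt{\tfrac14+t^2}=|1+s|/2$, so that $|F(-\tfrac{1}{2}+it)|=\tfrac{1}{2}|1+s|^{1}$, i.e.\ $A=\tfrac{1}{2}$, $\alpha=1$. Since $\alpha\geq\beta$, the hypothesis on the exponents is satisfied.

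The only remaining hypothesis is the Phragmén--Lindelöf growth condition $|F(s)|<C\exp\{e^{k|t|}\}$ for some $k\in(0,\pi)$; this is easy: Stirling gives $|\Gamma(a+it)|\sim\sqrt{2\pi}\,|t|^{a-\tfrac12}e^{-\pi|t|/2}$, and the exponential factors cancel in $F$, leaving at most polynomial growth in $|t|$, which is comfortably below the required bound. Plugging $A,B,\alpha,\beta,Q,a,b$ into the conclusion of Lemma \ref{lem1} produces
\[
|F(s)| \;\leq\; (\tfrac{1}{2})^{1/2-\sigma}\cdot 1^{\sigma+1/2}\cdot|1+s|^{1/2-\sigma}
\;=\; \Bigl(\tfrac{|1+s|}{2}\Bigr)^{1/2-\sigma},
\]
which is exactly the claimed inequality.

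The potentially troublesome step is the boundary computation on $\sigma=-\tfrac12$, where one must notice that the factor $(-\tfrac14+\tfrac{it}{2})$ that falls out of the functional equation $\Gamma(z+1)=z\Gamma(z)$ has modulus exactly $|1+s|/2$; everything else is essentially bookkeeping. Once that coincidence is spotted, the lemma is a one-line consequence of the Phragmén--Lindelöf result already established.
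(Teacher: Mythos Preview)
Your proof is correct. The paper itself does not prove this lemma but merely cites Rademacher's 1959 paper; the argument you give --- computing $|F|$ exactly on the two boundary lines via the conjugation symmetry and one application of $\Gamma(z+1)=z\Gamma(z)$, then interpolating with the explicit Phragm\'en--Lindel\"of theorem (Lemma~\ref{lem1}) --- is essentially Rademacher's own proof, so you have supplied what the paper outsources. The only point worth a word of extra care is regularity at $s=0$: there $\Gamma(\tfrac12 s)$ has a pole, not a zero, but since $1/\Gamma$ is entire this makes $F$ vanish rather than blow up, so your claim that $F$ is analytic on the closed strip stands.
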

\begin{proof}
See \cite[p.\ 197]{Rademacher}.
\end{proof}

It follows that 
\begin{equation}\label{-e}
|\zeta(-\eta + iT)| \leq \left(\frac{|s+1|}{2\pi}\right)^{\frac{1}{2} + \eta} \zeta(1+\eta).
\end{equation}

The following lemma contains two estimates on the growth of $|\zeta(s)|$ in strips on either side of the critical line. 

\begin{Lem}\label{subconeslem}
Suppose there exist constants $B$ and $\theta$ satisfying
\begin{equation}\label{opsi}
|\zeta(\tfrac{1}{2} + it)| \leq B|s+1|^{\theta},
\end{equation} 
for all $t$. Equations (\ref{1+e}) and (\ref{opsi}) show that for $\frac{1}{2} \leq \sigma \leq 1+\eta$, and for $t>t_{0}>e$
\begin{equation}\label{rightofahalf}
|\zeta(s)| \leq \left\{C_{1}^{\theta(1+\eta - \sigma) + \frac{1}{2} + \eta}B^{1+\eta - \sigma}\log\zeta(1+\eta)^{\sigma - \frac{1}{2}} t^{\theta(1+\eta - \sigma)}\right\}^{1/(\frac{1}{2} + \eta)},
\end{equation}
where
\begin{equation*}
C_{1} = \sqrt{1+ \left(\frac{2+\eta}{t_{0}}\right)^{2}}.
\end{equation*}

Also, given (\ref{opsi}) and (\ref{-e}) then for $-\eta \leq \sigma \leq \frac{1}{2}$ and for $t>t_{0} >e$,
\begin{equation}\label{leftofahalf}
|\zeta(s)| \leq \left\{\left[\frac{\zeta(1+\eta)}{(2\pi)^{\frac{1}{2}+  \eta}}\right]^{\frac{1}{2} - \sigma}B^{\sigma + \eta} \{C_{2}t\}^{(\frac{1}{2} + \eta)(\frac{1}{2} - \sigma) + \theta(\sigma+\eta)}\right\}^{1/(\frac{1}{2} + \eta)},
\end{equation}
where 
\begin{equation*}
C_{2} = \sqrt{1+\frac{1}{t_{0}^{2}}}.
\end{equation*}
\end{Lem}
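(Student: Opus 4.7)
Both bounds are obtained by invoking the Phragm\'en--Lindel\"of principle (Lemma \ref{lem1}) on a vertical strip flanking the critical line. In each case the role of $Q$ is played by $Q=1$, so that the $|s+1|$ appearing in (\ref{opsi}) and (\ref{-e}) is absorbed directly into the hypotheses of the lemma. The growth condition in Lemma \ref{lem1} is met trivially for $\zeta(s)$ since $\zeta(s)=O(|t|^{M})$ in any fixed vertical strip.

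\emph{Bound (\ref{rightofahalf}).} First I would apply Lemma \ref{lem1} to $\zeta(s)$ on the strip $\tfrac{1}{2}\leq\sigma\leq 1+\eta$, taking $a=\tfrac{1}{2}$, $b=1+\eta$, $Q=1$. On the line $\sigma=\tfrac{1}{2}$, the hypothesis (\ref{opsi}) furnishes $A=B$ and $\alpha=\theta$; on the line $\sigma=1+\eta$, the trivial estimate (\ref{1+e}) furnishes $B'=\zeta(1+\eta)$ and $\beta=0$, so that the requirement $\alpha\geq\beta$ is automatic. Invoking Lemma \ref{lem1} and raising both sides to the $(\tfrac{1}{2}+\eta)$-th power gives
\begin{equation*}
|\zeta(s)|^{\frac12+\eta}\leq B^{1+\eta-\sigma}\,\zeta(1+\eta)^{\sigma-\frac12}\,|1+s|^{\theta(1+\eta-\sigma)}.
\end{equation*}
Since $(1+\sigma)^{2}\leq(2+\eta)^{2}$ throughout the strip, one has $|1+s|\leq C_{1}t$ for $t>t_{0}>e$. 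Substituting this and collecting the resulting exponents of $C_{1}$ and $t$ yields (\ref{rightofahalf}).

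\emph{Bound (\ref{leftofahalf}).} I would run the same machinery on the strip $-\eta\leq\sigma\leq\tfrac{1}{2}$, now with $a=-\eta$, $b=\tfrac{1}{2}$, $Q=1$. On the line $\sigma=-\eta$, the estimate (\ref{-e}) is rewritten in the form
\begin{equation*}
|\zeta(-\eta+it)|\leq\frac{\zeta(1+\eta)}{(2\pi)^{\frac12+\eta}}\,|1+s|^{\frac12+\eta},
\end{equation*}
so that $A=\zeta(1+\eta)/(2\pi)^{\frac12+\eta}$ and $\alpha=\tfrac12+\eta$. On the line $\sigma=\tfrac{1}{2}$ we again invoke (\ref{opsi}), giving $B'=B$ and $\beta=\theta$. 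The ordering $\alpha\geq\beta$ is then the hypothesis $\theta\leq\tfrac12+\eta$, which is amply satisfied by the Cheng--Graham exponent $\theta=\tfrac{1}{6}$ that the paper intends to use. Applying Lemma \ref{lem1}, raising to the $(\tfrac12+\eta)$-th power, and bounding the remaining $|1+s|$ factor by $C_{2}t$ gives (\ref{leftofahalf}).

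\emph{Main obstacle.} There is no real analytic obstacle here: both halves of the lemma are pure convexity bookkeeping once Phragm\'en--Lindel\"of is in place. The one point that requires care is arranging the three boundary estimates (\ref{1+e}), (\ref{opsi}) and (\ref{-e}) into the uniform format $\left.A|Q+s|^{\alpha}\right.$ demanded by Lemma \ref{lem1} (which forces the choice $Q=1$), verifying $\alpha\geq\beta$ on each strip, and keeping track of the exponents of $|1+s|$ when passing to the $t$--dependent bound via $|1+s|\leq C_{i}t$. The rest is algebra.
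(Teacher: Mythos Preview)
Your argument for (\ref{leftofahalf}) is exactly what the paper does. But your argument for (\ref{rightofahalf}) has a genuine gap: you apply Lemma~\ref{lem1} to $f(s)=\zeta(s)$ on the strip $\tfrac12\le\sigma\le 1+\eta$, and that strip contains the pole of $\zeta$ at $s=1$, so the regularity hypothesis of Lemma~\ref{lem1} fails. Your remark that the growth condition is ``met trivially for $\zeta(s)$ since $\zeta(s)=O(|t|^{M})$'' already hints at the problem: near $s=1$ the function is not bounded at all.

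The paper handles this by taking $f(s)=(s-1)\zeta(s)$, which is entire. On $\sigma=\tfrac12$ one then uses $|s-1|\le|s+1|$ together with (\ref{opsi}) to get $A=B$, $\alpha=1+\theta$; on $\sigma=1+\eta$ one uses $|s-1|\le|s+1|$ with (\ref{1+e}) to get $\beta=1$. After applying Lemma~\ref{lem1} one divides back by $|s-1|$ and bounds the resulting ratio $|s+1|/|s-1|$ by $C_{1}$ for $t>t_{0}$. This is precisely where the extra $\tfrac12+\eta$ in the exponent of $C_{1}$ in (\ref{rightofahalf}) comes from; your direct application to $\zeta(s)$ would have produced only $C_{1}^{\theta(1+\eta-\sigma)}$, which does not match the stated bound and is a second sign that the pole has been overlooked.
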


The writing of (\ref{opsi}) is simply to use a suitable form of (\ref{cg}) in Lemma \ref{lem1}. To prove (\ref{rightofahalf}) take\footnote{Note that Lemma \ref{lem1} cannot be applied directly to $\zeta(s)$ owing to the pole at $s=1$.}, in Lemma \ref{lem1}, $f(s) = (s-1)\zeta(s), \, a = \frac{1}{2}, \, b= 1+\eta,\, Q=1$, and use (\ref{opsi}) and (\ref{1+e}). The term $C_{1}$ springs from replacing $|s-1|$ with $|s+1|$.

To prove (\ref{leftofahalf}) take $f(s) = \zeta(s),\, a= -\eta,\, b = \frac{1}{2},\, Q=1$, and use (\ref{-e}) and (\ref{opsi}). The term $C_{2}$ is obtained by replacing $|s+1|$ with $t$. 

\subsection{The value of $B$}\label{4.1}
To arrive at (\ref{opsi}) consider (\ref{cg}), viz.
\begin{equation*}
|\zeta(\tfrac{1}{2} +it)| \leq 3t^{1/6}\log t \leq 3|s+1|^{1/6} \log t,
\end{equation*}
for $t>e$. To accommodate the $\log t$ term, note that one can choose a small $\delta$ and hence find a (large) $A_{0} = A_{0}(\delta, t_{0})$ such that $\log t \leq A_{0} t^{\delta} \leq A_{0}|s+1|^{\delta}$, for $t\geq t_{0}$. Since the function $\log t/t^{\delta}$ never exceeds $(\delta e)^{-1}$, it follows that for all $t>e$
\begin{equation*}
|\zeta(\tfrac{1}{2} + it)| \leq  \frac{3}{\delta e}|s+1|^{1/6 + \delta},
\end{equation*}
and a computational check shows the above to be valid for all $t\geq 0$. Thus we may take 
\begin{equation}\label{bdef}
B= B(\delta) =  \frac{3}{\delta e}.
\end{equation}
However, this presupposes that at a reasonable height for computation one wishes to use the bound (\ref{cg}) as opposed to the `ordinary' convextiy estimate
\begin{equation}\label{nopsi}
|\zeta\left(\tfrac{1}{2} +it\right)| \leq 2.53\,  |1+s|^{\frac{1}{4}},
\end{equation}
which can be deduced from that in \cite{Lehman}.
It is clear that as $t$ increases one should prefer (\ref{cg}) to (\ref{nopsi}) but, as will be shown in the next section, this preference is not immutable, particularly for modest values of $t$. Indeed, the dependence of $B$ on $\delta$ is the primary source of frustration in seeking an improvement to Backlund's method, and it would be of great use to have access to a bound of the type
\begin{equation*}
|\zeta(\tfrac{1}{2} + it)|\leq Ct^{1/6},
\end{equation*}
which would be of use even if $C$ were as large as, say $1000$.

\section{Computation}\label{5}

Equation (\ref{gvib}) is
\begin{equation*}
|S(T)| \leq \frac{2}{\pi}\log\zeta(1+\eta) + \frac{n+1}{N},
\end{equation*}
where $n$ is bounded by (\ref{biggun}). One can now use Lemma \ref{subconeslem} in (\ref{biggun}) to obtain a bound on $S(T)$ depending on, \textit{inter alia}, the variable $r$ where $1< r\leq 2$. This general form is bloated with terms involving $\sin^{-1}1/r$ and the like, and to include it here would be inexcusable. One must decide whether to use Backlund's trick (i.e. $r=2$ and twice as many zeroes) or to take a smaller value of $r$. 

It can be shown, after a little computation, that the use of Backlund's trick is the better option. The general bound of (\ref{gvib}) is given in an appendix, and hereafter we shall choose $r=2$. For ease of exposition many of the error terms have been estimated --- probably not optimally\footnote{For example, using an upper bound $\eta \leq 1$, while true, is a weaker estimate for many of the applications. But since many of these terms are suitably small, and since Theorem \ref{ThemB} concisely presents the nature of the upper bound for $S(T)$, such minute savings have been ignored.} --- and the $r=2$ upper bound on (\ref{gvib}) is given below in 

\begin{theorem}\label{ThemB}
When $\theta = \frac{1}{6} + \delta$ choose $B(\delta) = \frac{3}{\delta e}$ so that (\ref{opsi}) holds; if $\theta = \frac{1}{4}$ choose $B(\delta) = 2.53$, whence (\ref{opsi}) holds by virtue of (\ref{nopsi}). In either case, for all $T> T_{0}>3$
\begin{equation}\label{BackTur}
|S(T)|\leq a+ b\log T,
\end{equation}
where
\begin{equation}\label{aet}
a = a(\delta, \eta, T_{0}) =  1.85\log\zeta(1+\eta) + 0.71\log B(\delta) - 0.58 + \frac{1}{T_{0}},
\end{equation}
and 
\begin{equation}\label{bet}
b =  b(\delta, \eta) =  \frac{2\theta(1+\tfrac{\pi}{3} - \sqrt{3}) + (\eta + \tfrac{1}{2})( \sqrt{3} - \tfrac{\pi}{3})}{2\pi\log 2}.
\end{equation}
\end{theorem}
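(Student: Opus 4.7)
The plan is to start from (\ref{gvib}) and bound $n$ by applying Jensen's formula (\ref{biggun}) to the function $f$ of (\ref{fdes}) on the circle of radius $1+2\eta$ centred at $1+\eta+iT$ (i.e.\ $r=2$). Method $\mathcal{B}$ (Backlund's trick) guarantees at least $2n-2$ zeroes of $f$ in this disc, so Jensen yields
\begin{equation*}
(2n-2)\log 2 \le I_{1}+I_{2}+I_{3}+I_{4}-\log|f(1+\eta)|.
\end{equation*}
Substituting into (\ref{gvib}), letting $N\to\infty$ along Rosser's sequence so that $\log|f(1+\eta)|/N\to\log|\zeta(1+\eta+iT)|$, and invoking the Euler-product bound $|\zeta(1+\eta+iT)|\ge\zeta(2+2\eta)/\zeta(1+\eta)$, I obtain an inequality of the shape
\begin{equation*}
|S(T)|\le\frac{2}{\pi}\log\zeta(1+\eta)+\frac{1}{2\log 2}\limsup_{N\to\infty}\frac{I_{1}+I_{2}+I_{3}+I_{4}}{N}+\frac{\log\zeta(1+\eta)-\log\zeta(2+2\eta)}{2\log 2}.
\end{equation*}

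The second step is to estimate the four arcs. On $I_{1}$, where $\Re s\ge 1+\eta$, the trivial bound gives $I_{1}/N\le\tfrac12\log\zeta(1+\eta)$. On $I_{2}$ and $I_{4}$, where $\Re s\in[\tfrac12,1+\eta]$, I would apply (\ref{rightofahalf}); on $I_{3}$, where $\Re s\in[-\eta,\tfrac12]$, I would apply (\ref{leftofahalf}). The parametrisation $s=1+\eta+(1+2\eta)e^{i\phi}$ gives $\sigma(\phi)=1+\eta+(1+2\eta)\cos\phi$ and height $|T\pm(1+2\eta)\sin\phi|$, which I would bound above by $C_{2}T$ (and below analogously) so as to isolate $\log T$ cleanly, the discrepancy being absorbed into $\log C_{1}$ and $\log C_{2}$ and ultimately into the $1/T_{0}$ term of (\ref{aet}).

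The key algebraic simplification is the identity $(1+2\eta)/(\tfrac12+\eta)=2$: every $\sigma$-dependent exponent in (\ref{rightofahalf})--(\ref{leftofahalf}) reduces to a degree-one polynomial in $\cos\phi$ with $\eta$-free coefficient. The antiderivatives are elementary, and evaluation at $\pi/2,2\pi/3,4\pi/3,3\pi/2$ produces only the numbers $\pi/3$, $\sqrt{3}$, and $\pi/6$. Collecting the $t^{\theta(1+\eta-\sigma)}$ contributions from $I_{2}+I_{4}$ and the $t^{(\tfrac12+\eta)(\tfrac12-\sigma)+\theta(\sigma+\eta)}$ contribution from $I_{3}$, and dividing the sum by $2\pi\cdot 2\log 2$, should reproduce (\ref{bet}) exactly. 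The remaining terms furnish linear combinations of $\log\zeta(1+\eta)$, $\log B$, $\log(2\pi)$, $\log C_{1}$, and $\log C_{2}$, whose sum --- together with the $\tfrac{2}{\pi}\log\zeta(1+\eta)$ of the opening step --- becomes the constant $a$ of (\ref{aet}) after crude bounds on the $C_{i}$.

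The main obstacle is not analytic but bookkeeping. None of the integrals is difficult, since each is an integral of $1$, $\cos\phi$, or $1+\cos\phi$ over a short arc, but verifying that the rounded coefficients $1.85$, $0.71$, and $-0.58$ in (\ref{aet}) genuinely absorb every overestimate introduced along the way (such as replacing $\eta$ by $1$ in select places, or discarding $\log\zeta(2+2\eta)$) requires a careful audit. As the author warns immediately before the theorem, the numerical coefficients are \emph{probably not optimally} chosen, so the task is only to verify that the stated bounds hold, not to match them with equality.
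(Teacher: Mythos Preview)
Your proposal is correct and follows essentially the same approach as the paper: apply Backlund's trick with $r=2$, split the Jensen integral into the four arcs $I_{1},\ldots,I_{4}$ determined by $\sin^{-1}\tfrac{1}{2}=\tfrac{\pi}{6}$, use Rosser's limiting argument together with the Euler-product lower bound for the centre term, and estimate $I_{2},I_{4}$ via (\ref{rightofahalf}) and $I_{3}$ via (\ref{leftofahalf}), leaving only elementary integrals of $1$ and $\cos\phi$ over $[\tfrac{\pi}{2},\tfrac{2\pi}{3}]$ and $[\tfrac{2\pi}{3},\tfrac{4\pi}{3}]$ whose values yield the coefficients in (\ref{aet})--(\ref{bet}). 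The only minor deviation is that you retain the $\log\zeta(2+2\eta)$ term, which the paper simply drops (using $\zeta(2+2\eta)>1$) when recording the rounded constant $-0.58$.
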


Equation (\ref{bet}) shows that the size of $b$ is diminshed when $\eta$ and $\delta$ are taken closer to zero. Also, (\ref{aet}) shows that $a$ is increased when either of $\eta$ or $\delta$ is diminished. This inverse proportionality occurs similarly in analysis of Turing's Method \cite{TrudTur} and it is herewith treated in like fashion.

If one wishes to investigate the size of $S(T)$ beyond some large height, then one can afford to take $\delta$ and $\eta$ smaller, so long as the term $b\log T$ in (\ref{BackTur}) continues to dominate. Indeed, for a given $T_{0}$, the minimal value of $a+ b\log T_{0}$ is sought. As an example, the Riemann hypothesis has been verified past $T_{0} = 10^{10}$ (see, e.g.\ \cite{Wed}) so it is beyond this height that explicit bounds on $S(T)$ would be of the greatest use. 

As a benchmark, Rosser's bounds on $|S(T)|$ are, for $T\geq T_{0}$
\begin{equation}\label{rbound}
|S(T)| \leq 1.588 + \left\{0.137 + 0.443\frac{\log\log T_{0}}{\log T_{0}}\right\}\log T.
\end{equation}

The following table compares the size of $b$ --- the coefficient of $\log T$ in (\ref{BackTur}) --- and the overall bound on $S(T)$, where each is obtained by Rosser's bound (\ref{rbound}), Theorem \ref{ThemB} with $\theta = \frac{1}{4}$, and Theorem \ref{ThemB} with $\theta = \frac{1}{6} + \delta$.
\begin{table}[h]
\caption{Comparison of bounds on $S(t)$}
\label{optable}
\centering
\begin{tabular}{c c c c c c c}
\hline\hline
$t_{0}$ & \multicolumn{2}{c}{(\ref{rbound})} &\multicolumn{2}{c}{Thm \ref{ThemB}: $\theta = \frac{1}{4}$} &\multicolumn{2}{c}{Thm \ref{ThemB}: $\theta = \frac{1}{6} + \delta$} \\[2 ex] 
& $b$ & $S(T)$ & $b$ & $S(T)$ & $b$ &  $S(T)$\\[0.5 ex] \hline
$10^{10}$ & 0.1974 & 6.132 & 0.170 & 5.912& 0.170 & 7.968 \\ 
$10^{12}$ & 0.1902 & 6.844 & 0.162 & 6.67& 0.162 & 8.644 \\ 
$10^{14}$ & 0.1847 & 7.543 & 0.156 & 7.395& 0.156 & 9.298\\ 
$10^{16}$ & 0.1804 & 8.233 & 0.152 & 8.122& 0.152 & 9.932\\ 
$10^{18}$ & 0.1768 & 8.916 & 0.148 & 8.797& 0.148 & 10.56\\ 
$10^{20}$ & 0.1738 & 9.594 & 0.145 & 9.47& 0.145 & 11.17\\ 
$10^{40}$ & 0.159 & 16.21 & 0.131 & 15.78& 0.126 & 17.26\\ 
$10^{60}$ & 0.153 & 22.70 & 0.126 & 21.69& 0.119 & 22.44\\ \hline\hline
\end{tabular}
\end{table}

Theorem \ref{thA} follows at once from the first row of middle-column, along with the calculation of $a$ from (\ref{aet}). Note that the convexity estimates are marginally superior to Rosser's bounds in each case. Moreover, the sub-convexity estimates (the right-column) only improve on Rosser's bounds in the last row. A simple computation shows that the value of $b$ obtained from the sub-convexity estimates, only overtakes that obtained by the middle-column when $T_{0} > 10^{26}$. 

Finally, note that from \cite{Huxley}, the bound $\zeta(\frac{1}{2} +it) \ll t^{\theta}$, where $\theta = \frac{32}{205}$ and Theorem \ref{ThemB} show that
\begin{equation*}
|S(T)| \leq 0.1013\log T,
\end{equation*}
for $T$ sufficiently large.

\section{Conclusion}

It is tempting to see what further improvements to Theorem \ref{thA} might be possible. One way is to try to combine methods $\mathcal{A}$ and $\mathcal{B}$. That is, to take some $r<2$ and to try to replicate Backlund's trick by showing that there must be some zeroes of $f(s)$ lying on the segment left of $\frac{1}{2} +iT$, that is, the line connecting $1+\eta - r(\frac{1}{2} + \eta) + iT$ and $\frac{1}{2} +iT$. Unfortunately such a manoeuvre would require some knowledge of the nature of the horizontal distribution of the zeroes of $\Re \zeta(s)$. If such a result were known it would be natural to suspect some diminution in the constants in Theorem \ref{thA}.

\section{Appendix: the explicit bound of method $\mathcal{A}$}
For any $r\in(1, 2)$ then
\begin{equation}\label{bloated}
|S(T)| \leq \frac{2}{\pi}\log\zeta(1+\eta) + \frac{a_{1}+ a_{2}\log B + a_{3}\frac{9}{2t_{0}^{2}} + a_{4} \log\zeta(1+\eta) + a_{5}\log t}{\pi\log r},
\end{equation}
where
\begin{equation*}
\begin{split}
a_{1}& = \tfrac{3\pi}{8t_{0}} -(\tfrac{1}{2}\log 2\pi)\tfrac{\pi}{2} - \sin^{-1}\tfrac{1}{r} + r\sqrt{1-\tfrac{1}{r^{2}}}, \\
a_{2} & = 2\left(\tfrac{\pi}{2} - \sin^{-1}\tfrac{1}{r}\right) - 3r\sqrt{1-\tfrac{1}{r^{2}}} +r, \\
a_{3} &= r\theta(2-\sqrt{1-\tfrac{1}{r^{2}}}) + \sin^{-1}\tfrac{1}{r} + (\tfrac{1}{2} + \eta - 2\theta)(\sin^{-1}\tfrac{1}{r} -\tfrac{\pi}{2} + r\sqrt{1-\tfrac{1}{r^{2}}}), \\
a_{4} &= \tfrac{3\pi}{2} - \sin^{-1}\tfrac{1}{r} + 2r\sqrt{1-\tfrac{1}{r^{2}}} + 1-r, \\
a_{5} &= r\theta +(\tfrac{1}{2} + \eta - 2\theta)(\sin^{-1}\tfrac{1}{r} -\tfrac{\pi}{2} + r\sqrt{1-\tfrac{1}{r^{2}}}).
\end{split}
\end{equation*}
The bound in (\ref{bloated}) only improves on that in Theorem \ref{thA} if $\zeta|(\frac{1}{2} +it)| \ll t^{\theta}$, where $\theta < 1/50$.

\section*{Acknowledgements}
I wish to thank Nathan Ng and Habiba Kadiri who suggested this problem, and Roger Heath-Brown for his helpful suggestions.

\bibliographystyle{plain}
\bibliography{themastercanada}

\end{document}